\documentclass[11pt]{article}
\usepackage[a4paper,margin=27mm]{geometry}
\usepackage{amsmath,amssymb,amsthm,mathtools}
\usepackage{tikz}
\usetikzlibrary{arrows.meta,calc}
\usepackage{flafter}
\usepackage{placeins}
\usepackage[hidelinks]{hyperref}
\newtheorem{theorem}{Theorem}
\newtheorem{lemma}[theorem]{Lemma}
\newtheorem{proposition}[theorem]{Proposition}

\theoremstyle{remark}

\newcommand{\HH}{\mathbb H}
\newcommand{\RR}{\mathbb R}
\newcommand{\ip}[2]{\langle #1,#2\rangle}
\newcommand{\norm}[1]{\lVert #1\rVert}
\newcommand{\CELL}{\mathcal C}

\title{Sharp dimension bounds for right-angled\linebreak hyperbolic polytopes}
\author{Alexander Kolpakov}
\date{28 October 2026}

\begin{document}
\maketitle
\begin{abstract}
We determine the largest dimensions in which right-angled hyperbolic
polytopes of finite volume can exist. The dimension is at most eight
in general, and at most four when all vertices lie at infinity. Both
bounds are attained, improving the previously known upper bounds of
twelve and six. Our proofs use elementary properties of hyperbolic
polytopes together with bounds for configurations of points on spheres.
\end{abstract}

\section{Introduction}

Right-angled hyperbolic polytopes provide fundamental domains for
reflection groups. Potyagailo and
Vinberg~\cite{PV} construct finite-volume examples through dimension
eight. Dufour~\cite{Dufour} proves that their dimension is at most
twelve. For polytopes whose vertices are all ideal, the regular
right-angled $24$-cell gives an example in dimension four, while
the upper bound six was established in~\cite[Corollary~7]{Kolpakov}.
We close these two gaps. The compact case already has the sharp
dimension bound four; see~\cite[Section~2]{PV}.

A polytope is \emph{entirely ideal} if all its vertices are ideal.
Throughout, hyperbolic polytopes have finitely many facets and finite
volume, and the curvature is normalized to $-1$.
A right-angled polytope has dihedral angle $\pi/2$ at every ridge.
Finite and ideal vertices are allowed in any combination.

\begin{theorem}[All vertex types]\label{thm:general}
A finite-volume right-angled hyperbolic polytope has dimension at
most eight.
\end{theorem}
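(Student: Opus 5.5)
We reduce the question to a point-configuration bound on a sphere and then apply a sharp kissing-type estimate. The plan is to show that if a right-angled polytope $P\subset\HH^n$ of finite volume exists, then $P$ has an ideal vertex (when $n\ge 5$) whose link yields a large configuration on $S^{n-2}$, and that such a configuration cannot exist once $n\ge 9$.

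\textbf{Step 1: produce an ideal vertex.} Every face of a right-angled polytope is again a right-angled polytope of finite volume, of one lower dimension, since facets meet orthogonally. By the compact bound of \cite[Section~2]{PV}, for $n\ge 5$ the polytope $P$ cannot be compact, so it has an ideal vertex $v$. Its link is a Euclidean right-angled polytope of dimension $n-1$, hence a rectangular box. Consequently exactly $2(n-1)$ facets pass through $v$, grouped into $n-1$ pairs of facets that are parallel at $v$.

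\textbf{Step 2: use a suitable face to get many pairwise ``far'' facets.} I would take a $3$-dimensional or $2$-dimensional face through $v$, or pass to a facet $F$ of $P$, and consider the facets of $P$ adjacent to $F$. Their normals, projected into the Lorentzian model $\RR^{n,1}$, are unit spacelike vectors $e_i$. Orthogonality gives $\ip{e_i}{e_j}=0$ for adjacent facets, while for non-adjacent ones we have $\ip{e_i}{e_j}\le -1$ (tangent at infinity or ultraparallel). The key step is to normalize at the ideal vertex $v$ using the horosphere model centered at $v$. The facets not through $v$ then appear as hemispheres of radii $r_i$ centered at points $c_i\in\RR^{n-1}$. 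The conditions become $|c_i-c_j|^2 = r_i^2+r_j^2$ (orthogonal) or $|c_i-c_j|^2\ge (r_i+r_j)^2$ (non-intersecting). The $2(n-1)$ vertical facets form the box. Finite volume forces the hemispheres to cover the box's cross-section, and the box sides force the hemispheres adjacent to opposite sides to interact.

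\textbf{Step 3: convert to a spherical code.} Taking the hemispheres adjacent to a corner or to the facets near $v$, one gets a family of unit vectors in $\RR^{n-1}$ or $S^{n-2}$ with pairwise angles at least $\pi/3$ (from the ``$\ge$'' inequality, after rescaling via the box geometry) and with cardinality at least a quantity growing like $2^{n-1}$ or $n(n-1)$. Comparing this with known kissing numbers shows that the count required by the box structure exceeds the kissing number of $\RR^{n-1}$ for $n\ge 9$. For $n=9$ we would compare against $240$ in $\RR^8$ and exploit the uniqueness of $E_8$ to rule out equality. Equivalently, one can use a linear programming (Delsarte) bound applied to the Gram matrix of the $e_i$, which is positive semidefinite of rank $\le n+1$ with entries in $\{1,0\}\cup(-\infty,-1]$.

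\textbf{Main obstacle.} The hard part is Step 3: extracting a configuration whose size provably grows faster than the kissing or LP bound, so that the threshold lands exactly at $n=8$. My first attempt would be to count the facets meeting a fixed ideal-vertex neighbourhood via the $2^{n-1}$ corners of the link box, each requiring a covering hemisphere. I would then bound these with the Gram-matrix rank argument, checking numerically that the resulting inequality first fails at $n=9$.
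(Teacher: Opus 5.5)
Your proposal has a genuine gap: Step~3, which you yourself flag as the main obstacle, carries the entire content of the theorem, and Steps~1--2 do not supply it.

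\textbf{Step 3 has no argument.} The hemisphere relations $|c_i-c_j|\ge r_i+r_j$ and $|c_i-c_j|^2=r_i^2+r_j^2$ involve radii of arbitrary sizes. You never say from which point the directions are measured, or why distinct hemispheres would give directions separated by $\pi/3$. Disjoint balls of very different radii can lie in the same direction from a point, so ``rescaling via the box geometry'' hides the whole difficulty.

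\textbf{The proposed counts do not work.}
\begin{itemize}
\item The count $n(n-1)$ gives $72$ at $n=9$, far below $240$, so no contradiction with the kissing number of $\RR^8$ follows.
\item Suppose your construction really produced $2^{n-1}$ points on $S^{n-2}$ at pairwise angles $\ge\pi/3$. Applied to the eight-dimensional Potyagailo--Vinberg examples, it would give $128$ such points on $S^6$. That is a kissing configuration in $\RR^7$ beating the best known one ($126$ points), so this count should not be expected to hold.
\end{itemize}

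\textbf{The Delsarte fallback fails as stated.} The Gram matrix of the unit normals in $\RR^{n,1}$ is not positive semidefinite. For a finite-volume polytope it is indefinite, of signature $(n,1)$. Linear-programming bounds for spherical codes therefore do not apply to it directly.

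\textbf{Step 1 is not used.} The ideal vertex and its box link never enter any actual inequality.

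\textbf{The missing idea.} You need a way to manufacture a Euclidean code with controlled angles. The paper does this at a finite interior point, not at a cusp.
\begin{itemize}
\item Relative to a unit timelike point $q$, write the unit normals as $m_i=(t_i,\sqrt{1+t_i^2}\,u_i)$ with heights $t_i=-\ip{q}{m_i}$. Then $\ip{m_i}{m_j}\le0$ becomes $u_i\cdot u_j\le t_it_j/\sqrt{(1+t_i^2)(1+t_j^2)}$, so small heights give a well-separated code.
\item Heights are lowered by projecting onto the facet of largest height $t$ via $q'=(q+tn)/\sqrt{1+t^2}$. This point lies in the relative interior of that facet, and every inherited height is divided by $\sqrt{1+t^2}$. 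After $j$ steps all heights are below $1/\sqrt j$.
\item In a nine-polytope, three steps give a six-face whose code in $S^5$ satisfies $u_i\cdot u_j<1/4$.
\item The cubic $F(x)=(x+\tfrac12)^2(x-\tfrac14)$, written as a sum of tensor squares, bounds such a code by $26$ points. This contradicts Dufour's lower bound of $27$ facets for six-dimensional right-angled polytopes.
\end{itemize}
So the counting input is a known facet-number lower bound for a lower-dimensional face, not a count of hemispheres over a cusp. No appeal to $E_8$ or to kissing numbers in dimension eight is needed.
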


\begin{theorem}[Entirely ideal polytopes]\label{thm:ideal}
An entirely ideal finite-volume right-angled hyperbolic polytope has
dimension at most four.
\end{theorem}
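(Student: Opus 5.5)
Our plan is to work entirely at one ideal vertex and reduce the statement to a bound on sphere packings in a Euclidean space. Let $P\subset\HH^n$ be entirely ideal and right-angled, and fix an ideal vertex $v$. In the upper half-space model with $v=\infty$, the facets through $v$ are vertical hyperplanes. Because the angles are $\pi/2$, they bound a right-angled Euclidean polytope of finite volume in $\RR^{n-1}$, namely the vertex link. A compact right-angled Euclidean polytope is a box, so we may normalize it to $[0,a_1]\times\cdots\times[0,a_{n-1}]$. Every other facet of $P$ is a hemisphere orthogonal to $\partial\HH^n$. It meets a vertical facet either orthogonally or not at all. Two such hemispheres also meet orthogonally or not at all, and when they are disjoint the hyperbolic distance between the facets is positive.

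The first step uses the hypothesis that every vertex is ideal. Consider the lower part of $P$: the region above the box that lies outside all the hemispheres. Its finite vertices would be genuine vertices of $P$, and there are none. So every vertex of the lower boundary sits on $\partial\HH^n=\RR^{n-1}$, which means it is a point where hemispheres (and box walls) are tangent. In particular, each corner of the box is an ideal vertex, hence a tangency point of some hemispheres. Near each corner, the hemispheres meeting there cut out a link that is again a right-angled box.

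The second step turns this into a packing. We take the hemispheres centred at the $2^{n-1}$ corners of the box, or more cleanly, reflect across the box walls to obtain a lattice-periodic configuration. Two hemispheres meeting orthogonally must belong to a single vertex's facet set. Distinct hemispheres that share no vertex are either disjoint or tangent, and tangency creates an ideal vertex. The key claim is that the corner hemispheres all have equal radius $r$, with $\max a_i\le 2r$ or a similar relation. Using orthogonality to the walls and the tangency pattern, we aim to show that the boundary spheres at the points of the lattice $\prod a_i\mathbb Z$ form a configuration whose mutual inner products (in the Lorentzian model, normalized normals $e_i$ with $\ip{e_i}{e_j}\in\{0\}\cup(-\infty,-1]$) give a set of unit vectors in a sphere of dimension $n-2$. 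These vectors have pairwise angles at least $\pi/2$, or tangencies forcing angle $\pi/3$ for a kissing-type configuration. Kissing-number bounds then apply: a hemisphere tangent to its neighbours around an ideal vertex produces kissing configurations in $\RR^{n-1}$. Equivalently, the hemispheres through a single ideal vertex of the lower part give, after inversion, a right-angled box whose corner structure forces $2(n-1)$ mutually tangent spheres at a point arranged like the cross-polytope, plus additional tangent spheres filling the box.

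The main obstacle is making the counting sharp. We expect a volume or angle count of the following kind. Each vertex of the lower part is the tangency point of spheres arranged as a cross-polytope in the inverted picture. Counting the face incidences at $\infty$ against these forces $2^{n-1}\le$ (a kissing number $\tau_{n-1}$)-type inequality, or, more plausibly, the Gram-matrix relation $\sum\ip{e_i}{e_j}$ combined with positive semidefiniteness of the Gram matrix of the corner normals restricted to a spacelike subspace. First we would try the Gram-matrix route. Take the $2^{n-1}$ corner facets together with the $2(n-1)$ walls, compute their Gram matrix from the tangency data (entries $0$, $-1$, or $\le -1$), and show that it has more than one negative eigenvalue once $n\ge5$. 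This would contradict Lorentzian signature $(n,1)$. The $24$-cell in dimension four would serve as a check that the inequality is tight.
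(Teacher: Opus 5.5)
Your first step is sound. With an ideal vertex at $\infty$, the link is a compact right-angled Euclidean polytope, hence a box. Because every vertex is ideal, each vertical edge over a corner $c$ of the box ends at $c\in\partial\HH^n$. The link there is again a box, formed by the $n-1$ walls through $c$ and $n-1$ pairwise orthogonal hemispheres, each tangent at $c$ to one of these walls.

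Everything after that, however, is a plan rather than an argument (``we aim to show'', ``we expect'', ``first we would try''). Nothing forces the corner hemispheres to share a radius; there are $n-1$ of them at each corner, with a priori independent radii. No lattice or kissing configuration is derived, and no contradiction is produced in dimension five.

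The proposed Gram-matrix route also cannot work as stated. Take any box and, at each corner $c$, the $n-1$ hemispheres of a small common radius $\rho$ centred at the points $c\pm\rho e_i$ that lie in the box. Those at one corner are mutually orthogonal. Each is tangent to one wall and orthogonal to the other walls through $c$. Every remaining pair involving a hemisphere is disjoint once $\rho$ is small. This configuration exists in $\HH^n$ for every $n$, with exactly the qualitative Gram entries $0$, $-1$, $<-1$ you list. So its Gram matrix comes from actual vectors in $\RR^{n,1}$ and has at most one negative eigenvalue.

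The walls and corner hemispheres alone therefore cannot give a contradiction. It has to come from the global requirement that further facets cover the whole floor of the box with finite volume and only ideal vertices, and you never quantify that. The fallback inequality $2^{n-1}\le\tau_{n-1}$ would not suffice either: $16\le\tau_4=24$, $32<40\le\tau_5$ and $64<72\le\tau_6$, so it excludes none of $n=5,6,7$.

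The missing ingredient is a quantitative link to a sharp spherical bound, together with a matching lower facet bound. The paper gets both from a single farthest-facet projection, applied from a finite interior point of a putative five-polytope.
\begin{itemize}
\item The projection gives a relative-interior point of an entirely ideal four-dimensional facet $Q$ at which all facet heights are $<1$.
\item The splitting $m_i=(t_i,\sqrt{1+t_i^2}\,u_i)$ turns $\ip{m_i}{m_j}\le0$ into $u_i\cdot u_j<1/2$ on $S^3$.
\item Musin's bound then gives at most $24$ facets, and the minimum-facet theorem forces equality and identifies $Q$ with the $24$-cell.
\item Two opposite facets of the $24$-cell have heights summing to $2a\ge2$, which contradicts the strict bound.
\end{itemize}
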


Finite-volume right-angled examples in dimension eight are recalled
in~\cite{PV}. The regular ideal right-angled $24$-cell gives the
four-dimensional example. Thus both bounds are sharp.

\subsection{Outline of the proofs}
Both arguments begin with the same geometric observation. Projecting
a finite interior point to a facet of maximum unit-normal height
gives an interior point of that facet and decreases every inherited
height by a controlled factor. Repetition produces a low-dimensional
face whose outward normals determine a spherical code.

For Theorem~\ref{thm:general}, three projections in dimension nine
give a six-dimensional right-angled face whose facets determine
a spherical code with pairwise scalar products strictly below $1/4$.
An explicit cubic sum of tensor squares bounds the cardinality of
this code, and hence the number of facets, by $26$. This contradicts
the known minimum of $27$ facets in dimension six.

For Theorem~\ref{thm:ideal}, one projection in dimension five gives
an entirely ideal four-dimensional facet. Its facets determine
a code on $S^3$ with pairwise scalar products strictly below $1/2$,
so Musin's kissing-number theorem bounds its number of facets by $24$.
The minimum-facet theorem~\cite[Theorem~4]{Kolpakov} forces equality
and identifies the facet, up to hyperbolic isometry, with the regular ideal right-angled
$24$-cell. The heights at two opposite facets then contradict the
strict height bound from the projection, excluding the equality case.

The projection is proved in Section~\ref{sec:descent}; the two
contradictions follow in Sections~\ref{sec:code} and~\ref{sec:ideal}.
Figures~\ref{fig:ladder} and~\ref{fig:idealbranch} summarize the two
reductions.

\subsection{Previous work}
We recall three results used in the proofs.

\begin{proposition}[Six-dimensional facet minimum]\label{prop:27}
A six-dimensional finite-volume right-angled hyperbolic polytope
has at least $27$ facets.
\end{proposition}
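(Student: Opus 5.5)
We would not reprove this from scratch. It is a known lower bound on the number of facets, and we would cite it from the literature on right-angled polytopes (the counting arguments of~\cite{Kolpakov}, building on Nikulin-type averaging). For completeness, the argument we would reproduce goes as follows. It is by induction on dimension and rests on two structural facts about a finite-volume right-angled polytope $P\subset\HH^n$.

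First, every face of $P$ is again a finite-volume right-angled polytope. The reason is that facets meeting a given facet $F$ do so orthogonally. Hence the hyperplane of $F$ cuts their hyperplanes in the walls of $F$, still at angle $\pi/2$. Moreover, distinct facets of $F$ come from distinct facets of $P$ adjacent to $F$.

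Second, the vertex links are rigid. A finite vertex is simple, with link a right-angled spherical simplex. An ideal vertex has as link a compact right-angled Euclidean $(n-1)$-polytope, which must be a box, so it lies on exactly $2(n-1)$ facets.

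From these facts we would get a recursion of the shape $f_{n-1}(P)\ge 1+f_{n-2}(F)+(\text{facets of }P\text{ disjoint from }F)$ for any facet $F$. Here the disjoint facets are forced by the finite-volume condition, which requires every face of $F$ to be closed off. The base cases are the right-angled polygons ($\ge 5$ sides if compact, $\ge 3$ with ideal vertices allowed) and the known small three-dimensional minimisers.

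The key quantitative step is Nikulin's averaging inequality. For a polytope whose vertex links are simplices or cubes, the average number of $\ell$-faces of a $k$-face is bounded above in terms of $n$. We would compare this upper bound with the lower bound that the induction gives for each individual face, using the Euler relation to tie the face numbers together. In dimension six, this comparison forces $f_5(P)\ge 27$.

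The main obstacle is bookkeeping the mixed case, where finite and ideal vertices occur together. Nikulin's bound must then be adapted to links that are cubes rather than simplices. We would handle this by counting face–vertex incidences separately for the two vertex types and bounding each contribution. Since this is precisely the content of the cited work, in the paper we would simply invoke the proposition as stated there.
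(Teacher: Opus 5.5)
Citing the result is also what the paper does, but you cite the wrong source. The paper attributes Proposition~\ref{prop:27} to Lemma~1 and Proposition~4 of \cite{Dufour} (see also \cite[Section~4]{Alex}). In this paper \cite{Kolpakov} is used only for entirely ideal polytopes: the $24$-facet theorem for ideal four-polytopes (Proposition~\ref{prop:24}) and the earlier ideal dimension bound six. Proposition~\ref{prop:27} must hold for six-polytopes with finite vertices. That mixed case is exactly what the proof of Theorem~\ref{thm:general} needs, because the six-face produced by three projections may have finite vertices.

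Your reconstruction ``for completeness'' also does not establish the bound. The relation $f_{n-1}(P)\ge 1+f_{n-2}(F)+\#\{\text{facets not adjacent to }F\}$ is an identity, since facets adjacent to $F$ correspond bijectively to facets of $F$. So all the content lies in a lower bound for the non-adjacent facets, and you only gesture at that. The step you call key cannot supply it. Nikulin's inequality (or Khovanskii's version) applies only for $\ell<k\le\lfloor n/2\rfloor$, so $k\le 3$ in dimension six. It bounds the average number of $\ell$-faces of a $k$-face by a constant depending only on $n$, and it carries no information about $f_{n-1}(P)$. Comparing it with face-by-face lower bounds can therefore only yield a non-existence statement, never a numerical bound such as $f_5(P)\ge 27$. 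In fact the logic runs the other way. For $n=13$, $k=6$, $\ell=5$ the bound is $\binom{8}{7}\bigl(\binom{6}{5}+\binom{7}{5}\bigr)/\bigl(\binom{6}{6}+\binom{7}{6}\bigr)=27$. So the facet bound $27$ for six-faces is the input that lets Nikulin's inequality exclude dimension $13$, rather than something the inequality produces. As a citation-only argument your proposal becomes acceptable once the reference is corrected. As a sketched proof it has a gap exactly at the number $27$.
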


This follows from Lemma~1 and Proposition~4 of
Dufour~\cite{Dufour}; see also~\cite[Section~4]{Alex}.

\begin{proposition}[Musin]\label{prop:musin}
A finite subset of $S^3\subset\RR^4$ with pairwise products at most
$1/2$ between distinct members has cardinality at most $24$.
\end{proposition}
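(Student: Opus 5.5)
We will follow Musin's refinement of the Delsarte linear programming method. Let $X\subset S^3$ be finite with $\ip{x}{y}\le 1/2$ for all distinct $x,y\in X$, and put $N=|X|$. Write $G_k$ for the Gegenbauer polynomials attached to $S^3$. These are the Chebyshev polynomials of the second kind $U_k$, normalized so that $G_k(1)=1$. They are positive definite in the sense that
\[
\sum_{x,y\in X} G_k(\ip{x}{y})=\frac{1}{\dim H_k}\sum_{j}\Bigl|\sum_{x\in X}Y_{k,j}(x)\Bigr|^2\ge 0
\]
for an orthonormal basis $(Y_{k,j})_j$ of the degree-$k$ spherical harmonics $H_k$. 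Take a polynomial $f=\sum_k f_kG_k$ with $f_0>0$ and $f_k\ge 0$ for all $k$. Then $\sum_{x,y}f(\ip{x}{y})\ge f_0N^2$.

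The classical Delsarte bound additionally requires $f\le 0$ on $[-1,1/2]$. It then yields $N\le f(1)/f_0$, and the best such $f$ only gives $N\le 25$, which is not enough. Musin's idea is to relax the sign condition. One asks only that $f(t)\le 0$ on $[-t_0,1/2]$ for some $t_0\in(1/2,1)$, and allows $f>0$ on $[-1,-t_0)$.

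For each fixed $x\in X$, define
\[
S(x)=\sum_{y\in X}f(\ip{x}{y})=f(1)+\sum_{y\ne x}f(\ip{x}{y}).
\]
Only the points $y$ with $\ip{x}{y}<-t_0$ can contribute positively to $S(x)$. These points lie in the spherical cap of angular radius $\arccos t_0$ around $-x$, and they are pairwise at angular distance at least $\pi/3$. When $t_0$ is close to $0.6$, such a cap contains only a small number $m$ of them, at most four. Hence
\[
S(x)\le h_{\max}:=f(1)+\max\Bigl\{\sum_{i=1}^{m}f(-\ip{e}{y_i})\Bigr\},
\]
where the maximum runs over all admissible configurations $y_1,\dots,y_m$ in the cap about $e=-x$ with pairwise products at most $1/2$. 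Summing over $x$ gives $f_0N^2\le\sum_x S(x)\le N h_{\max}$, so $N\le h_{\max}/f_0$.

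The plan is then to exhibit an explicit polynomial. Following Musin, we take one of degree $9$, with $t_0\approx 0.6058$, expand it in the $G_k$, and check that all coefficients are nonnegative. We then verify the sign condition on $[-t_0,1/2]$ by locating the roots of $f$. Finally we show $h_{\max}/f_0<25$, which forces $N\le 24$.

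The main obstacle is the rigorous computation of $h_{\max}$. This requires the following steps:
\begin{itemize}
\item bounding $m$ via a packing argument in the cap;
\item showing that $f$ is decreasing on $[-1,-t_0]$, so that contributions are largest near $-x$;
\item solving the resulting small extremal problem, namely maximizing $\sum_i f(-\ip{e}{y_i})$ over at most four points with mutual angles at least $\pi/3$ near a pole.
\end{itemize}
I would first reduce this extremal problem to configurations in which the points are as close to $e$ as the separation allows. I would then treat each value of $m$ separately, using monotonicity of $f$ together with a spherical-geometry estimate for the largest possible sum. A certified numerical check of the resulting one- or two-parameter optimization would finish the proof.
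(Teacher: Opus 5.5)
The paper does not prove this proposition; it cites Musin's Main theorem. Your framework is Musin's modified Delsarte argument (positive definiteness of the $G_k$, the local sums $S(x)$, and $N\le h_{\max}/f_0$), and it is correct. However, the packing claim that the cap $\{y:\ip{e}{y}>t_0\}$ holds at most four code points is false, and your case analysis rests on it.

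Let $e_1,e_2,e_3$ be an orthonormal basis of $e^\perp$ and take $t_0<a\le 1/\sqrt2$. The six points $a\,e\pm\sqrt{1-a^2}\,e_k$ lie in the cap, and their pairwise products are $a^2\le 1/2$ or $2a^2-1<0$. So six points fit.

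Six is also the correct maximum. Write $y_i=a_ie+\sqrt{1-a_i^2}\,v_i$ with $v_i\in S^2$. The constraint gives $v_i\cdot v_j\le \tfrac{1/2-a_ia_j}{\sqrt{(1-a_i^2)(1-a_j^2)}}$, and this bound decreases in each $a_i\ge t_0>1/2$. Hence the $v_i$ are separated by more than $\arccos\tfrac{1/2-t_0^2}{1-t_0^2}\approx 78.06^\circ$. Seven points on $S^2$ achieve at most about $77.87^\circ$ (Sch\"utte--van der Waerden), so a seventh point is excluded, but only narrowly. Your value $t_0\approx 0.6058$ is essentially the threshold at which a seventh point would start to fit. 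The relevant root of Musin's degree-nine polynomial is about $-0.6079$. So $h_{\max}$ must be taken over $0\le m\le 6$.

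The omitted cases are not negligible. Use Musin's polynomial, for which $f_0=1$ and $f(1)=18.774$. Take three points $\sqrt{2/3}\,e+\sqrt{1/3}\,w_k$, where $w_1,w_2,w_3\in e^\perp$ are unit vectors at $120^\circ$; their pairwise products are exactly $1/2$. Add the two points $b\,e\pm\sqrt{1-b^2}\,w'$, with $w'\in e^\perp$ orthogonal to the $w_k$ and $t_0<b\le\sqrt{3/8}$. This is an admissible five-point configuration in the cap, and it gives
$h_5\ge f(1)+3f(-\sqrt{2/3})\approx 18.774+3(1.956)\approx 24.64$.
That is within $0.4$ of the threshold $25$, so $m=5$ (and $m=6$) need genuine estimates, not a dismissal.

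The optimization itself is also larger than you describe. For $m\ge 3$, configurations modulo rotations fixing $e$ have $3m-3$ parameters, not one or two. Your idea of pushing points toward $e$ is the right start, but it has to become a structural lemma. In an extremal configuration, each point must be blocked either by the cap boundary or by enough neighbours at exactly $60^\circ$. You then need to enumerate the resulting rigid configurations before a certified numerical check can finish the proof.
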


This is the Main theorem of~\cite[p.~4]{Musin}.

\begin{proposition}[{\cite[Theorem~4]{Kolpakov}}]\label{prop:24}
An entirely ideal right-angled hyperbolic four-polytope has at least
$24$ facets. Equality holds only for the regular ideal right-angled
$24$-cell, up to hyperbolic isometry.
\end{proposition}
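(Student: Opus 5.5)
Since every vertex is ideal, the plan is to reduce the facet bound to counting combinatorial data. I would then settle the equality case by combinatorial rigidity.

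\emph{Step 1: local structure.} The link of an ideal vertex of a right-angled polytope is a compact Euclidean right-angled $3$-polytope, hence a rectangular box. So every vertex lies in exactly $6$ facets, $12$ two-faces and $8$ edges. Every edge has finite interior points, where the polytope is simple. Hence each edge lies in $3$ facets and $3$ two-faces, and has two ideal endpoints. Each facet $F$ is an entirely ideal right-angled $3$-polytope, so all its vertices are $4$-valent. With $v_F$ vertices, $F$ therefore has $2v_F$ edges and $v_F+2$ faces. The smallest case is the octahedron, with $v_F=6$ and $8$ faces.

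\emph{Step 2: relations among face numbers.} Write $f_0,\dots,f_3$ for the face numbers. Double counting gives $f_1=4f_0$ from edge endpoints. Counting incidences of two-faces with facets gives $2f_2=\sum_F(v_F+2)=6f_0+2f_3$, using the vertex--facet count $\sum_F v_F=6f_0$. These relations turn out to be consistent with Euler's relation $f_0-f_1+f_2-f_3=0$ identically. So Euler alone yields nothing, and an extra inequality is needed.

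\emph{Step 3: the extra inequality (main obstacle).} I would fix a facet $F$ and list the facets forced to be distinct from one another.
\begin{itemize}
\item First, $F$ itself.
\item Next, the $v_F+2\ge 8$ facets adjacent to $F$ along its two-faces.
\item Then, for each ideal vertex $p$ of $F$, the facet opposite to $F$ in the cube link at $p$. It is disjoint from $F$, so it is neither $F$ nor one of its neighbours.
\end{itemize}
The key claim to prove is that two distinct vertices of $F$ cannot share the same opposite facet, unless a specific configuration occurs that can be counted separately. The tool for this is the right-angled structure: two facets that are disjoint in $\HH^4$ but meet at infinity determine a unique ideal point. If the claim holds, it gives $f_3\ge 1+(v_F+2)+v_F=2v_F+3$. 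This is only $15$ for the octahedron, so it is not yet enough.

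To close the gap, I would also count the facets adjacent to these opposite facets at the remaining cube-link positions, and average over all facets $F$ using the identity from Step 2. Alternatively, I would use the Gauss--Bonnet/Euler-characteristic volume formula, since in dimension four it is a combinatorial expression in the $f_i$ and must be positive. I expect this estimate, giving $f_3\ge 24$, to be the hard part. I would first test it on the case where all facets are octahedra, since that is where the bound should be tight.

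\emph{Step 4: equality.} In the equality case, every inequality used in Step 3 must be sharp. This should force every facet to be an octahedron and the incidences to coincide with those of the $24$-cell. Right-angled finite-volume polytopes are determined up to isometry by their combinatorics, via Andreev-type rigidity, or via Mostow rigidity for the associated reflection group. Hence such a polytope is the regular ideal right-angled $24$-cell.
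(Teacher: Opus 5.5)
\emph{Missing inequality in Step~3.} The paper does not prove this statement; it imports it from \cite[Theorem~4]{Kolpakov}. Judged on its own, your proposal has a genuine gap exactly where you place the main obstacle: the bound $f_3\ge24$ is never derived.

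What you do establish is $f_3\ge 1+(v_F+2)+v_F=2v_F+3$. The distinctness of the $G_p$ needs no exceptional configuration. The hyperplanes of $F$ and $G_p$ are disjoint in $\HH^4$ and tangent at $p$, so a second common ideal point $q$ would put the geodesic $pq$ in both. But this bound falls short of $24$ whenever every facet has $v_F\le10$; an octahedral facet gives only $15$.

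Neither proposed repair helps as stated:
\begin{itemize}
\item \emph{Averaging.} Averaging over $F$ cannot beat the best single facet. The Step~2 relations are homogeneous, so they contain no absolute constant.
\item \emph{Gauss--Bonnet.} This points the wrong way. With Step~2 the orbifold Euler characteristic is $1-f_3/2+f_2/4-f_1/8=1+(f_0-f_3)/4$, so $\mathrm{vol}(P)=\tfrac{\pi^2}{3}(f_0-f_3+4)$. Positivity gives $f_3<f_0+4$, an \emph{upper} bound on $f_3$. Together with $f_0\ge f_3$ (from $v_F\ge6$) it yields the volume bound $\mathrm{vol}(P)\ge 4\pi^2/3$, not a facet bound.
\end{itemize}

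\emph{The second shell, and Step~4.} Counting facets adjacent to $G_p$ at the remaining cube-link positions adds nothing. The cube link at a vertex $p$ of $F$ consists of $F$, $G_p$ and four neighbours of $F$, all already counted. In the $24$-cell the $9$ uncounted facets are all ultraparallel to $F$. So any argument must reach facets at adjacency distance at least two from $F$.

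For instance, take $F$ octahedral. Each neighbour $N$ has at least $8$ faces. Only $7$ of them are accounted for: the triangle shared with $F$, the three neighbours of $F$ across its edges, and the three $G_p$ at its vertices. This forces a further facet next to each $N$. You would then have to show that these facets are new and pairwise distinct, find one more, and treat facets with $8\le v_F\le10$ separately. None of this appears.

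Consequently Step~4 has no sharp chain of inequalities whose equality case forces octahedral facets with $24$-cell incidences. The closing rigidity step (Mostow--Prasad for the reflection group) is fine. However, it applies only after the combinatorial identification, which is the missing content.
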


The last two propositions are needed only for Theorem~\ref{thm:ideal}.
The spherical bound used for Theorem~\ref{thm:general} is proved
directly in Section~\ref{sec:code}.

For a right-angled polytope in the Lorentz model, distinct outward unit facet normals have
nonpositive products, including facets that do not share a ridge.
As explained in~\cite[p.~42, \S2.1]{Vinberg}, nonobtuse ridge
angles imply this condition: nonadjacent supporting planes
do not intersect, and the off-diagonal Gram entries are nonpositive.
Faces of dimension at least three inherit right angles: every
intrinsic ridge has a finite relative-interior point, where the
incident walls are orthogonal. In the closure in the Klein model, a face has
vertices among those of the original polytope, so entire ideality is
inherited. Triangulation into finite or ideal simplices gives finite
volume.

\section{Facet projections}\label{sec:descent}

Use Lorentz signature $(-,+,\ldots,+)$, future unit timelike points,
and outward \emph{unit} spacelike normals. Thus
\[
 P=\{x\in\HH^n:\ip{x}{m}\le0\text{ for every facet normal }m\},
 \qquad \ip{m}{m'}\le0\quad(m\ne m').
\]
All heights below refer to this fixed unit-normal scaling. They are
the hyperbolic sines of distances to the corresponding walls.

\begin{lemma}[Farthest-facet projection]\label{lem:projection}
Let $q$ be a finite relative-interior point of a right-angled polytope
$P$, with $q^2=-1$. Set $t(m)=-\ip{q}{m}>0$, choose a facet normal
$n$ with largest height $t=t(n)$, and put
\begin{equation}\label{eq:projection}
 q'=\frac{q+tn}{\sqrt{1+t^2}}.
\end{equation}
Then $q'$ lies in the relative interior of the facet
$Q=P\cap n^\perp$. Each facet of $Q$ is induced by a facet of $P$
with unit normal $m_i$ orthogonal to $n$. This normal remains unit
in $n^\perp$ and has height
\begin{equation}\label{eq:newheight}
 0<t'_i=-\ip{q'}{m_i}
       =\frac{t(m_i)}{\sqrt{1+t^2}}
       \le\frac{t}{\sqrt{1+t^2}}<1.
\end{equation}
\end{lemma}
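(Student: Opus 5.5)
The plan is a direct computation in the Lorentz model, followed by a face-structure argument to identify the facets of $Q$. I will check, in order, that $q'$ is a point of $\HH^n$ lying on $n^\perp$; that it satisfies every other facet inequality strictly; that the facets of $Q$ come from facets $m_i$ with $\ip{m_i}{n}=0$; and finally the height formula~\eqref{eq:newheight}.

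\textbf{Step 1 ($q'\in\HH^n\cap n^\perp$).} Since $\ip{q}{n}=-t$, $q^2=-1$ and $n^2=1$, we get
\[
(q+tn)^2=-1-2t^2+t^2=-1-t^2 .
\]
Hence $q'$ is a unit timelike vector. To see that it is future-pointing, I will use the path $q+sn$, $s\in[0,t]$. Along it $(q+sn)^2=-1-2st+s^2<0$, so the path stays timelike. It therefore cannot leave the future cone containing $q$. Moreover $\ip{q'}{n}=(-t+t)/\sqrt{1+t^2}=0$, so $q'$ lies on the wall of $n$.

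\textbf{Step 2 (strict inequalities).} For every other facet normal $m\neq n$,
\[
\ip{q'}{m}=\frac{-t(m)+t\ip{n}{m}}{\sqrt{1+t^2}}\le\frac{-t(m)}{\sqrt{1+t^2}}<0 ,
\]
using $\ip{n}{m}\le0$ (the Vinberg condition recalled above) and $t(m)>0$. Thus $q'\in P\cap n^\perp=Q$ and $q'$ avoids every other wall. Since $P$ near $q'$ is cut out only by $n$, the point $q'$ lies in the relative interior of $Q$.

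\textbf{Step 3 (facets of $Q$).} This is the step I expect to need the most care. A facet of $Q$ is a codimension-two face of $P$, namely $P\cap n^\perp\cap m^\perp$ for some facet normal $m$, and it is nonempty in $\HH^n$ of dimension $n-2$. So the walls of $m$ and $n$ meet in $\HH^n$ and the face is a genuine ridge of $P$; a wall tangent only at infinity would give $\ip{m}{n}=-1$ and no $(n-2)$-dimensional face in $\HH^n$. I will invoke the standard fact that faces of a facet of a convex polytope are its intersections with other facets, together with the finite relative-interior points of ridges noted in the introduction. The ridge has dihedral angle $\pi/2$, so $\ip{m}{n}=0$. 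Consequently $m_i\in n^\perp$, and the Lorentz form restricted to $n^\perp$ has signature $(-,+,\dots,+)$, so $m_i$ remains a unit spacelike normal there. In the model $\HH^{n-1}=\HH^n\cap n^\perp$ it describes the corresponding wall of $Q$, with the same outward side.

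\textbf{Step 4 (heights).} With $\ip{n}{m_i}=0$, the formula in Step 2 becomes an equality, giving $t'_i=t(m_i)/\sqrt{1+t^2}$. Positivity follows from $t(m_i)>0$. The upper bound $t(m_i)\le t$ is exactly the maximality of the chosen height $t$. Finally, $t/\sqrt{1+t^2}<1$ holds trivially, which yields~\eqref{eq:newheight}.
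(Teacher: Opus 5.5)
Your proposal is correct and follows the paper's proof essentially step for step. The only differences are minor. You check future-pointing with the timelike path $q+sn$, where the paper just notes $\ip{q+tn}{q}=-1-t^2<0$. You also spell out more carefully why each facet of $Q$ comes from a genuine ridge with $\ip{m_i}{n}=0$.
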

\begin{proof}
The numerator in~\eqref{eq:projection} has square $-1-t^2$ and
product zero with $n$. Its product with $q$ is $-1-t^2<0$, so
normalization gives a future unit point in $n^\perp$. For every
other facet normal $m$,
\begin{equation}\label{eq:allwall}
 \ip{q'}{m}=\frac{\ip{q}{m}+t\ip{n}{m}}{\sqrt{1+t^2}}<0.
\end{equation}
The first term in the numerator is negative and the second is
nonpositive. Thus $q'$ lies in the relative interior of $Q$.

Each facet of $Q$ is a ridge of $P$ shared with a unique other
facet, whose unit normal we denote by $m_i$.
Right angles give $\ip{n}{m_i}=0$, so restriction to $n^\perp$
preserves its unit norm. Formula~\eqref{eq:newheight} now follows
from the maximum choice of $t$.
\end{proof}

If all heights at $q$ are at most $A$, monotonicity of
$x/\sqrt{1+x^2}$ gives a new bound
$A/\sqrt{1+A^2}$. Start with the finite maximum $A_0$ at any finite
interior point, and let $A_j$ be the maximum height after $j$
projections. Repeating the construction intrinsically gives
\begin{equation}\label{eq:iterate}
 A_j\le\frac{A_0}{\sqrt{1+jA_0^2}}<\frac1{\sqrt j}
 \qquad(j\ge1),
\end{equation}
whenever the selected faces still have dimension at least three.
The formula follows by induction, or by adding one to the inverse
square of the previous bound.

\begin{figure}[t]
\centering
\begin{tikzpicture}[>=Latex,font=\small,
 box/.style={draw,rounded corners,align=center,text width=2.4cm,
             minimum height=1.05cm}]
 \node[box] (q0) at(0,1.4) {$P^9,q_0$\\$A_0<\infty$};
 \node[box] (q1) at(3.45,1.4) {$P^8,q_1$\\$A_1<1$};
 \node[box] (q2) at(6.9,1.4) {$P^7,q_2$\\$A_2<1/\sqrt2$};
 \node[box] (q3) at(10.35,1.4) {$P^6,q_3$\\$A_3<1/\sqrt3$};
 \draw[->,thick] (q0)--(q1);
 \draw[->,thick] (q1)--(q2);
 \draw[->,thick] (q2)--(q3);
 \node[align=center,text width=11.7cm] at(5.2,.05)
  {At each step, choose a facet of maximum height and use
   $q'=(q+tn)/\sqrt{1+t^2}$.};
 \node[draw,align=center,text width=5cm] (split) at(2.5,-1.5)
  {$m_i=(t_i,\sqrt{1+t_i^2}\,u_i)$\\
   $\ip{m_i}{m_j}\le0$};
 \node[draw,align=center,text width=5cm] (code) at(8.1,-1.5)
  {$u_i\cdot u_j<1/4$ in $S^5$\\
   at most $26$ facets};
 \draw[->,thick] (q3.south) to[out=-90,in=35] (code.north east);
 \draw[->,thick] (split)--(code);
\end{tikzpicture}
\caption{Three successive facet projections and the resulting spherical code.
The arrows represent applications of the projection formulas.
The code has one element for each facet of the six-dimensional face,
so the bound of $26$ contradicts the minimum of $27$ facets.}
\label{fig:ladder}
\end{figure}

We will use the following translation from heights to spherical codes.
At a unit point $q$ of a $d$-dimensional face, choose $q$ as its time
axis and write the intrinsic unit normals as
\begin{equation}\label{eq:split}
 m_i=\bigl(t_i,\sqrt{1+t_i^2}\,u_i\bigr),
 \qquad u_i\in S^{d-1}\subset\RR^d.
\end{equation}
For distinct facets, $\ip{m_i}{m_j}\le0$ gives
\begin{equation}\label{eq:spatial}
 u_i\cdot u_j\le
 \frac{t_it_j}{\sqrt{(1+t_i^2)(1+t_j^2)}}.
\end{equation}
The vectors $u_i$ are distinct: if $u_i=u_j$, the Lorentz product of
the two unit normals would be positive. Consequently the number of
code points is exactly the number of facets of the selected face.

\section{A bound for spherical codes in dimension six}\label{sec:code}

\begin{lemma}\label{lem:26}
A finite set of unit vectors in $\RR^6$ whose distinct elements have
inner product strictly less than $1/4$ has at most $26$ members.
\end{lemma}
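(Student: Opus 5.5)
The plan is a Delsarte-type argument with an explicit cubic. Let $u_1,\dots,u_N\in S^5$ satisfy $u_i\cdot u_j<1/4$ for $i\ne j$. I would look for a cubic
\[
 f(x)=c_3\Bigl(x^3-\tfrac38x\Bigr)+c_2\Bigl(x^2-\tfrac16\Bigr)+c_1x+c_0 ,
 \qquad c_1,c_2,c_3\ge0 ,
\]
satisfying two conditions:
\begin{itemize}
\item $f(x)\le0$ on $[-1,1/4]$, with $f(1/4)=0$;
\item $f(1)/c_0\le 26$, or at least $<27$.
\end{itemize}
The three bracketed polynomials are the Gegenbauer polynomials for $S^5$, up to positive factors. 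Each has a nonnegative double sum over any finite configuration, because it is a squared norm of a tensor:
\begin{itemize}
\item $\sum_{i,j}(u_i\cdot u_j)^2-\tfrac16N^2=\norm{\sum_i u_i\otimes u_i-\tfrac N6 I}^2\ge0$;
\item $\sum_{i,j}u_i\cdot u_j=\norm{\sum_i u_i}^2\ge0$;
\item $\sum_{i,j}\bigl((u_i\cdot u_j)^3-\tfrac38u_i\cdot u_j\bigr)$ is the squared norm of the traceless part of $\sum_i u_i^{\otimes3}$ in $\mathrm{Sym}^3\RR^6$. Subtracting the trace part $\tfrac38(\cdot)$ is exactly the projection onto harmonic cubics, since $3/(d+2)=3/8$ for $d=6$.
\end{itemize}
This is the ``explicit cubic sum of tensor squares'' announced in the outline. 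I would state these three inequalities first, proving each by expanding the norm.

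Next comes the counting step. Summing $f(u_i\cdot u_j)$ over all ordered pairs, the tensor identities give
\[
 \sum_{i,j}f(u_i\cdot u_j)\ge c_0N^2 .
\]
On the other hand, the diagonal terms contribute $Nf(1)$ and every off-diagonal term is $\le0$, since $u_i\cdot u_j\in[-1,1/4)$. Hence $c_0N^2\le Nf(1)$, that is, $N\le f(1)/c_0$. The strict inequality $u_i\cdot u_j<1/4$ can be used to upgrade equality cases if the bound lands exactly on an integer. In particular, if $f(1)/c_0$ comes out as $26+\varepsilon$ with $\varepsilon$ small, integrality would still give $N\le 26$.

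The main obstacle is choosing the roots of $f$. The naive family $f(x)=(x-\tfrac14)(x+a)^2$ automatically satisfies the sign condition and the constraints $c_1,c_3\ge0$ (and $c_2\ge0$ for $a\ge1/8$). Optimizing over $a$, the best choice is $a=2/3$, which gives $c_0=5/72$ and $f(1)=25/12$. This yields only $N\le30$, which is insufficient.

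So I would instead take the Levenshtein-type shape
\[
 f(x)=(x-\tfrac14)(x+a)(x+b),\qquad -1\le -b\le -a ,
\]
with the simple root $-b$ possibly at or below $-1$ and the other roots placed to enforce $f\le0$ on $[-1,1/4]$. Equivalently, allow $f$ to touch zero at $x=-1$ as well as at $1/4$. Then I would optimize $f(1)/c_0$ in the two parameters $(a,b)$ subject to $c_1,c_2,c_3\ge0$.

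The optimum should be the degree-three Levenshtein bound for $(d,s)=(6,1/4)$. I expect it to lie just above or at $26$. If it lands strictly between $26$ and $27$, integrality finishes the proof. If it is exactly $26$, the strict hypothesis is not even needed.

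I would present the final polynomial with rational coefficients, so that the sign check on $[-1,1/4]$ and the coefficient check are both verifiable by hand. If degree three turns out not to reach $<27$, the fallback is to add a degree-four term with an adjusted root structure. The tensor-square positivity argument then goes through verbatim with $\sum_i u_i^{\otimes4}$.
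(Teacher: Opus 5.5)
You set up the same framework as the paper: Gegenbauer cubics for $S^5$, positivity via tensor squares, and $N\le f(1)/c_0$. There is a genuine gap, though, in two places.

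\emph{First, the optimization step is miscomputed, and you discard the polynomial that works.} For $f=(x-\tfrac14)(x+a)^2$ one finds
\[
c_2=2a-\tfrac14,\qquad c_1=a^2-\tfrac a2+\tfrac38,\qquad c_0=-\tfrac{a^2}4+\tfrac a3-\tfrac1{24},\qquad f(1)=\tfrac34(1+a)^2 .
\]
Hence $f(1)/c_0=18(1+a)^2/(-6a^2+8a-1)$, whose derivative is proportional to $(1+a)(2a-1)$. The minimum is therefore at $a=\tfrac12$, not $a=\tfrac23$. Your value $30$ at $a=\tfrac23$ is correct, but that point is not optimal. At $a=\tfrac12$ you get exactly the paper's polynomial
\[
F(x)=(x+\tfrac12)^2(x-\tfrac14)=x^3+\tfrac34x^2-\tfrac1{16},
\]
with $c_0=\tfrac1{16}$ and $F(1)=\tfrac{27}{16}$. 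This yields $N\le27$.

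\emph{Second, the target you set afterwards cannot be reached by any certificate.} You aim for $f(1)/c_0<27$, hoping for a value near $26$. The $27$ unit vectors of the $E_6$ (Schl\"afli) configuration in $\RR^6$, a tight spherical $4$-design, have pairwise products $\tfrac14$ and $-\tfrac12$. So they satisfy $u_i\cdot u_j\le\tfrac14$. Any $f$ with $f\le0$ on the closed interval $[-1,\tfrac14]$ and nonnegative Gegenbauer coefficients certifies $N\le f(1)/c_0$ for this configuration too. Hence $f(1)/c_0\ge27$, in degree three or in any degree. Consequently:
\begin{itemize}
\item The two-parameter search and the degree-four fallback cannot succeed.
\item Your remark that at $26$ ``the strict hypothesis is not even needed'' is false; strictness is essential.
\item Separately, your two-parameter cubic $(x-\tfrac14)(x+a)(x+b)$ violates the sign condition unless $a=b$. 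If $-b<-a$ and $-a>-1$, then $f>0$ on $(\max(-b,-1),-a)$.
\end{itemize}

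The missing idea is the equality case at $27$, which you mention only in passing. Suppose $N=27$. Then $\tfrac{N^2}{16}\le\sum_{i,j}F(u_i\cdot u_j)\le\tfrac{27N}{16}$ is tight, so every off-diagonal value $F(u_i\cdot u_j)$ must vanish. Since $u_i\cdot u_j<\tfrac14$ strictly, this forces $u_i\cdot u_j=-\tfrac12$ for all $i\ne j$. That is impossible already for four vectors, because $\norm{u_1+u_2+u_3+u_4}^2=4-6=-2$. Hence $N\le26$.
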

\begin{proof}
Consider the cubic polynomial
\begin{equation}\label{eq:polynomial}
 F(x)=(x+\tfrac12)^2(x-\tfrac14)
     =x^3+\tfrac34x^2-\tfrac1{16}.
\end{equation}
We express this polynomial as a sum of tensor inner products.

Use Euclidean tensor inner products. For a unit vector $u$
define the rank-two tensor
\[
 A(u)=u\otimes u-I/6
\]
and the symmetric rank-three tensors
\[
 S(u)_{abc}=u_a\delta_{bc}+u_b\delta_{ac}+u_c\delta_{ab},
 \qquad T(u)=u^{\otimes3}-S(u)/8.
\]
For $x=u\cdot v$, contraction gives
\begin{equation}\label{eq:kernels}
 \ip{A(u)}{A(v)}=x^2-\tfrac16,
 \qquad \ip{T(u)}{T(v)}=x^3-\tfrac38x.
\end{equation}
Each cross contraction of a cubic tensor with $S$ is $3x$; the
remaining contraction is
\[
 \ip{S(u)}{S(v)}=(3\cdot6+6)x=24x.
\]
The scalar identity therefore becomes
\begin{align*}
 F(u\cdot v)&=\tfrac1{16}+\tfrac38(u\cdot v)\\
 &\quad+\tfrac34\ip{A(u)}{A(v)}+\ip{T(u)}{T(v)}.
\end{align*}
Thus, for $u_1,\ldots,u_N$,
\begin{align}
 \sum_{i,j}F(u_i\cdot u_j)
 &=\frac{N^2}{16}+\frac38\norm{\sum_i u_i}^2\notag\\
 &\quad+\frac34\norm{\sum_i A(u_i)}^2
   +\norm{\sum_i T(u_i)}^2\notag\\
 &\ge\frac{N^2}{16}.\label{eq:sos}
\end{align}

On the other hand, $F(1)=27/16$, and $F(x)\le0$ for $x<1/4$.
The diagonal terms and~\eqref{eq:sos} give
\[
 \frac{N^2}{16}\le\sum_{i,j}F(u_i\cdot u_j)\le\frac{27N}{16},
\]
so $N\le27$. If $N=27$, equality throughout forces each
off-diagonal polynomial value to vanish. Since $u_i\cdot u_j<1/4$
strictly, the only available root is $-1/2$. Four such vectors would
already give
\[
 \norm{u_1+u_2+u_3+u_4}^2=4+12(-\tfrac12)=-2,
\]
which is impossible. Hence $N\le26$.
\end{proof}

\begin{proof}[Proof of Theorem~\ref{thm:general}]
Suppose first that a right-angled nine-polytope exists. Three
projections from a finite interior point produce a six-face
$R$ and a relative-interior point at which every facet height is
strictly less than $1/\sqrt3$, by~\eqref{eq:iterate}.
The inherited face is right-angled and has finite volume.
For its intrinsic normals,~\eqref{eq:spatial} now gives
\[
 u_i\cdot u_j<\frac14\qquad(i\ne j)
\]
in $S^5\subset\RR^6$. Lemma~\ref{lem:26} gives at most $26$
facets for $R$, contradicting Proposition~\ref{prop:27}.
Every higher-dimensional polytope has a nine-face inheriting finite
volume and right angles, giving the same contradiction.
\end{proof}

\section{The ideal bound from one projection}\label{sec:ideal}

We first record a property of the regular ideal right-angled $24$-cell.
In the standard Lorentz model of the regular ideal right-angled
$24$-cell, its outward unit normals are
\begin{equation}\label{eq:24normals}
 (1,\epsilon e_i+\delta e_j),\qquad
 1\le i<j\le4,\quad\epsilon,\delta\in\{1,-1\},
\end{equation}
and the center is $(1,0)$.

\begin{lemma}\label{lem:center}
No future unit timelike point of the regular ideal right-angled
$24$-cell has all its unit-normal heights strictly less than one.
More precisely, if every height is at most one, the point is the
center and all the heights equal one.
\end{lemma}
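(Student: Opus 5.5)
The plan is to average the heights over all $24$ normals in~\eqref{eq:24normals}. The spatial parts of these normals cancel in pairs, so the average height at any point equals its time coordinate. That coordinate is at least one for a future unit timelike point, with equality exactly at $(1,0)$.

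Write a future unit timelike point as $x=(x_0,x')$ with $x'\in\RR^4$, $x_0>0$ and $x_0^2-\norm{x'}^2=1$. Then $x_0\ge1$, with equality only when $x'=0$. The normals in~\eqref{eq:24normals} have squared Lorentz norm $-1+2=1$, so they are genuinely unit and the heights are in the fixed unit-normal scaling. With signature $(-,+,\ldots,+)$, the height at $m=(1,v)$, where $v=\epsilon e_i+\delta e_j$, is
\[
 h(m)=-\ip{x}{m}=x_0-x'\cdot v .
\]

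The key step is the cancellation. The set of spatial parts $\{\epsilon e_i+\delta e_j\}$ is invariant under $v\mapsto -v$, since $(\epsilon,\delta)$ can be replaced by $(-\epsilon,-\delta)$. Hence $\sum_v v=0$, and summing the heights over all $24$ normals gives
\[
 \sum_m h(m)=24x_0 .
\]
So the average height is $x_0\ge1$. The maximum height is therefore at least one, which proves the first assertion.

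For the more precise statement, suppose every height is at most one. Then $24x_0=\sum_m h(m)\le24$, so $x_0\le1$, and hence $x_0=1$ and $x'=0$. The point is the center $(1,0)$. Moreover, the sum of heights is $24$ while each height is at most one, so every height equals one; this also follows directly from $h(m)=1-0=1$ at the center.

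I do not expect a real obstacle. The only points needing care are the sign convention for heights and the unit normalization of the listed normals, both checked above. The argument never uses that $x$ lies in the polytope, so it applies to every future unit timelike point, which is slightly more than the lemma asks.
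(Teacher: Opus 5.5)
Your proof is correct and is essentially the paper's argument. The paper uses the same opposite-pair cancellation, but applies it to a single pair $(1,\pm r)$, whose heights sum to $2a\ge2$, while you sum over all twelve pairs to get $24x_0$. In both versions the resulting bound $x_0\le1$, combined with $x_0\ge1$, forces the point to be the center.
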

\begin{proof}
The normals in~\eqref{eq:24normals} come in opposite spatial pairs
$(1,r),(1,-r)$. Write the point as $(a,w)$, with $a>0$ and
$a^2-\norm w^2=1$. The two corresponding heights are
$a-r\cdot w$ and $a+r\cdot w$. If both are at most one, their
sum gives $a\le1$. Future unit timelikeness gives $a\ge1$.
Thus $a=1,w=0$, and every height equals one. Strict inequalities
on all heights are impossible.
\end{proof}

\begin{figure}[htbp]
\centering
\begin{tikzpicture}[>=Latex,font=\small,
 box/.style={draw,rounded corners,align=center,text width=3.45cm,
             minimum height=1.25cm}]
 \node[box] (facet) at(0,0)
  {A facet $Q^4$\\all $0<t_i<1$};
 \node[box] (count) at(4.65,0)
  {Facet bounds\\$24\le f(Q)\le24$};
 \node[box] (metric) at(9.3,0)
  {Metric equality\\$Q\cong\CELL$};
 \draw[->,thick] (facet)--(count);
 \draw[->,thick] (count)--(metric);
 \node[draw,align=center,text width=11.9cm] (pair) at(4.65,-1.65)
  {$m_\pm=(1,\pm r),\quad h_++h_-=2a\ge2$\\
   at least one height is at least $1$};
 \draw[->,thick] (metric.south)--(metric.south |- pair.north);
\end{tikzpicture}
\caption{The ideal case. Musin's spherical bound and the facet
theorem~\cite[Theorem~4]{Kolpakov} identify the projected facet with the regular ideal
$24$-cell. The heights at two opposite facets then contradict
the strict height bound from the projection.}
\label{fig:idealbranch}
\end{figure}

\begin{proof}[Proof of Theorem~\ref{thm:ideal}]
Assume an entirely ideal right-angled five-polytope exists. A single
farthest-facet projection from a finite interior point gives a
four-dimensional facet $Q$ at whose relative-interior point all heights satisfy
$0<t_i<1$. It inherits finite volume, entire ideality and right angles.
By~\eqref{eq:spatial}, the associated unit vectors in $S^3$ have
pairwise products strictly below $1/2$. Proposition~\ref{prop:musin}
bounds its facet number by $24$; Proposition~\ref{prop:24} gives the
reverse bound and identifies $Q$ with the regular ideal $24$-cell.
Lemma~\ref{lem:center} contradicts the strict heights $t_i<1$.

There is no entirely ideal right-angled five-polytope. In any higher
dimension a five-face would inherit all three hypotheses and give
the same contradiction. The ideal right-angled $24$-cell attains
dimension four.
\end{proof}

\FloatBarrier
\section{Supplementary verification}\label{sec:reproduce}

Exact Python checks verify the $24$-cell coordinates
in~\eqref{eq:24normals}, their Gram products and opposite pairs,
and the tensor identities~\eqref{eq:kernels}--\eqref{eq:sos}.
The polynomial identities are checked over $\mathbb Q$ modulo the
unit-sphere equations. The code and verification records are
available in the accompanying repository~\cite{Repository}.

Selected Lean~4 modules verify the internal logical deductions
for the projection, the cubic bound and the dimension bounds.
The geometric statements, face inheritance and cited theorems
are supplied as explicit premises.

\section*{Acknowledgments}
The author used OpenAI's GPT-6-astra and GPT-6.1-sol as research and coding
assistants to investigate proof ideas, help strengthen the results, improve
the exposition, and generate and audit the Python and Lean~4 code
accompanying this paper. The author reviewed the resulting mathematics
and code and takes responsibility for the final content.

\end{document}